\documentclass[12pt,english]{smfart}

\usepackage{dsfont,newtxtext,fourier}

\usepackage[text={6.5in,8.5in},centering]{geometry}

\usepackage[dvipsnames]{xcolor}   
\usepackage{xparse}
\usepackage{xr-hyper}
\usepackage[linktocpage=true,colorlinks=true,hyperindex,citecolor=teal,linkcolor=blue]{hyperref}

\DeclareMathOperator{\spec}{\mathrm{Spec}_{\textit{z}}(S)}
\DeclareMathOperator{\id}{\mathrm{Id}(\textit{S})}
\DeclareMathOperator{\mx}{\mathrm{Max}(\textit{S})}
\DeclareMathOperator{\z}{\mathrm{Id}_{\textit{z}}(\textit{S})}

\newcommand{\p}{\mathfrak{p}}

\DeclareMathOperator{\cz}{\mathrm{c}\;\!\!\ell_{\!\textit{z}}}

\newtheorem{theorem}{Theorem}
\newtheorem{proposition}[theorem]{Proposition}
\newtheorem{lemma}[theorem]{Lemma}
\theoremstyle{remark}
\newtheorem{remark}[theorem]{Remark}
\theoremstyle{definition}
\newtheorem{definition}{Definition}
\numberwithin{equation}{section}

\numberwithin{equation}{section}

\begin{document}
	
\title{Further remarks on $z$-ideals of semirings}

\author{Ahmed Maatallah}

\address{University of Monastir, Faculty of Sciences of Monastir,
Research Laboratory on Algebra, Number Theory and Nonlinear Analysis,
LR 18 ES 15, Boulevard of Environment, Monastir, 5019, Tunisia.}

\email{ahmedmaatallahsd93@gmail.com}
\author{Amartya Goswami}
	
\address{[1]  Department of Mathematics and Applied Mathematics, University of Johannesburg, P.O. Box 524, Auckland Park 2006, South Africa. [2]  National Institute for Theoretical and Computational Sciences (NITheCS), South Africa.}
	
\email{agoswami@uj.ac.za}
	
\subjclass{16Y60, 16D25.}

\keywords{Semiring, $z$-ideal, $z$-prime ideal.}
	
\begin{abstract}
In this note, we revisit certain results from a previous work of the second author concerning \( z \)-ideals of semirings. We demonstrate that the assumption of the semiring being a bzi-semiring can be dispensed with. Additionally, we propose a revised definition of the \( z \)-radical that enables a more general formulation of results from the earlier work.
\end{abstract}
\maketitle

Let \( S \) be a semiring. We denote by \( \id \) the set of all ideals of \( S \), and by \( \mx \) the set of all maximal ideals. An ideal \( \mathfrak{z} \subseteq S \) is called a \emph{\( z \)-ideal} if for every \( x \in \mathfrak{z} \), we have
\[
\bigcap \left\{ \mathfrak{m} \in \mx \mid x \in \mathfrak{m} \right\} \subseteq \mathfrak{z}.
\]
We write \( \z \) for the set of all \( z \)-ideals of \( S \). A proper \( z \)-ideal is said to be \emph{\( z \)-maximal} if it is not properly contained in any other proper \( z \)-ideal. A proper \( z \)-ideal \( \mathfrak{p} \) is said to be \emph{\( z \)-semiprime} if \( \mathfrak{a}^2 \subseteq \mathfrak{p} \) implies \( \mathfrak{a} \subseteq \mathfrak{p} \) for all \( \mathfrak{a} \in \z \). Similarly, \( \mathfrak{p} \) is said to be \emph{\( z \)-prime} if \( \mathfrak{a} \mathfrak{b} \subseteq \mathfrak{p} \) implies \( \mathfrak{a} \subseteq \mathfrak{p} \) or \( \mathfrak{b} \subseteq \mathfrak{p} \) for all \( \mathfrak{a}, \mathfrak{b} \in \z \).

Given \( x \in S \), we write \( \mathcal{M}_x := \{ \mathfrak{m} \in \mx\mid x \in \mathfrak{m} \} \), and define the corresponding \emph{basic \( z \)-ideal} \( \mathfrak{m}_x := \bigcap \mathcal{M}_x \). If every basic \( z \)-ideal in \( S \) is idempotent, we say that \( S \) is a \emph{bzi-semiring}. The \emph{\( z \)-closure} of an ideal \( \mathfrak{a} \in \id \) is defined by
\[
\cz(\mathfrak{a}) := \bigcap \left\{ \mathfrak{z} \in \z \mid \mathfrak{a} \subseteq \mathfrak{z} \right\}.
\]

It was shown in \cite[Theorem 3.7]{1} that the product of two \( z \)-ideals is again a \( z \)-ideal if and only if \( S \) is a bzi-semiring. This result was employed in the proof of \cite[Proposition 4.5]{1}, thereby imposing a restrictive assumption on the ambient semiring. We now show that this condition is superfluous.

\begin{theorem}
\label{wbzi}
An ideal \( \mathfrak{p} \subseteq S \) is \( z \)-prime if and only if it is a prime \( z \)-ideal.
\end{theorem}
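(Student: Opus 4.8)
The plan is to reduce the prime condition, which is quantified over all of $\id$, to the $z$-prime condition, which is quantified only over $\z$, by passing everywhere to $z$-closures. I would first record the routine fact that an intersection of $z$-ideals is again a $z$-ideal: if $x$ lies in each $\mz_i \in \z$, then $\m_x \subseteq \mz_i$ for every $i$, whence $\m_x \subseteq \bigcap_i \mz_i$. Consequently $\cz(\ma)$ is the smallest $z$-ideal containing $\ma$, and $\cz(\mz) = \mz$ for every $\mz \in \z$. The implication ``prime $z$-ideal $\Rightarrow$ $z$-prime'' is then immediate, since the defining condition of a prime ideal, holding for all pairs in $\id$, in particular holds for pairs drawn from $\z$; note that $\p$ is a proper $z$-ideal on both sides, so nothing need be checked there.

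For the converse, suppose $\p$ is $z$-prime and let $\ma, \mb \in \id$ satisfy $\ma\mb \subseteq \p$; I must produce $\ma \subseteq \p$ or $\mb \subseteq \p$. The decisive step is the claim that for each $a \in S$ the colon ideal $(\p : a) = \{ s \in S \mid as \in \p \}$ is a $z$-ideal, and symmetrically for $\{ s \mid sa \in \p \}$. To prove it, take $y \in (\p : a)$ and $w \in \m_y$; equivalently $\mathcal{M}_y \subseteq \mathcal{M}_w$. I must show $aw \in \p$. Since $ay \in \p$ and $\p$ is a $z$-ideal we have $\m_{ay} \subseteq \p$, so it suffices to show $\mathcal{M}_{ay} \subseteq \mathcal{M}_{aw}$, for then $aw \in \m_{ay} \subseteq \p$. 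Let $\m \in \mx$ contain $ay$. As maximal ideals are prime, either $a \in \m$, giving $aw \in \m$, or $y \in \m$, in which case $\m \in \mathcal{M}_y \subseteq \mathcal{M}_w$ forces $w \in \m$ and again $aw \in \m$. This establishes the claim.

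Now fix $a \in \ma$. Then $\mb \subseteq (\p : a)$, and since $(\p : a)$ is a $z$-ideal, $\cz(\mb) \subseteq (\p : a)$; letting $a$ range over $\ma$ yields $\ma\,\cz(\mb) \subseteq \p$. Repeating with the right-hand colon ideals, for each $c \in \cz(\mb)$ we get $\ma \subseteq \{ s \mid sc \in \p \}$, which is a $z$-ideal, hence $\cz(\ma) \subseteq \{ s \mid sc \in \p \}$, and therefore $\cz(\ma)\,\cz(\mb) \subseteq \p$. Both $\cz(\ma)$ and $\cz(\mb)$ are $z$-ideals, so $z$-primeness of $\p$ gives $\cz(\ma) \subseteq \p$ or $\cz(\mb) \subseteq \p$; since $\ma \subseteq \cz(\ma)$ and $\mb \subseteq \cz(\mb)$, we conclude $\ma \subseteq \p$ or $\mb \subseteq \p$, so $\p$ is prime.

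The heart of the matter, and the reason the bzi-hypothesis can be discarded, is the colon-ideal claim: it replaces the earlier reliance on the product of two $z$-ideals being a $z$-ideal, letting me work instead with the ordinary product $\cz(\ma)\,\cz(\mb)$ inside $\p$. I expect the main obstacle to be precisely that claim, whose proof rests squarely on maximal ideals being prime; I would double-check that this fact is genuinely available in the present setting (it holds, for instance, for commutative semirings with identity, via $\m + (a) = S$ whenever $a \notin \m$), since the whole reduction collapses without it.
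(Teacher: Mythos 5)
Your proof is correct, and it follows the same high-level strategy as the paper --- reduce primality over $\id$ to $z$-primality over $\z$ by establishing $\cz(\ma)\,\cz(\mb) \subseteq \p$ and applying the $z$-prime hypothesis to $\cz(\ma), \cz(\mb)$ --- but it reaches that pivotal inclusion by a genuinely different route. The paper simply cites \cite[Lemma 3.14]{1} for the closure identity $\cz(\cz(\ma)\cz(\mb)) = \cz(\ma\mb)$ together with $\cz(\p)=\p$, making the converse a two-line computation. You instead prove from scratch that the colon ideals $(\p : a)$ and $\{s \mid sa \in \p\}$ are $z$-ideals whenever $\p$ is, and then squeeze $\cz(\mb)$ and $\cz(\ma)$ into $\p$ one factor at a time. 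Your colon-ideal argument is sound: $w \in \m_y$ means exactly $\mathcal{M}_y \subseteq \mathcal{M}_w$, and your case split on a maximal ideal $\m \ni ay$ (either $a \in \m$ or, by primality of $\m$, $y \in \m$ and hence $w \in \m$) correctly yields $aw \in \m_{ay} \subseteq \p$. What your version buys is self-containedness and transparency about where the hypotheses enter --- in particular, that the whole reduction rests on maximal ideals being prime, a fact you rightly flag as needing the commutative-with-identity setting (which is the standing convention here and in \cite{1}). What the paper's version buys is brevity, at the cost of leaning on the earlier paper's machinery. One small stylistic note: your preliminary observations (intersections of $z$-ideals are $z$-ideals, $\cz$ is a closure operator fixing $\z$) are also implicitly used by the paper, so including them does no harm, but they could be cited rather than reproved.
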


\begin{proof}
Every prime \( z \)-ideal is trivially \( z \)-prime. Conversely, let \( \mathfrak{p} \) be a \( z \)-prime ideal and suppose \( \mathfrak{a} \mathfrak{b} \subseteq \mathfrak{p} \) for some \( \mathfrak{a}, \mathfrak{b} \in \id \). By \cite[Lemma 3.14]{1}, we have
\[
\cz(\mathfrak{a}) \cz(\mathfrak{b}) \subseteq \cz(\cz(\mathfrak{a}) \cz(\mathfrak{b})) = \cz(\mathfrak{a} \mathfrak{b}) \subseteq \cz(\mathfrak{p}) = \mathfrak{p}.
\]
Since \( \cz(\mathfrak{a}), \cz(\mathfrak{b}) \in \z \), and \( \mathfrak{p} \) is \( z \)-prime, it follows that \( \cz(\mathfrak{a}) \subseteq \mathfrak{p} \) or \( \cz(\mathfrak{b}) \subseteq \mathfrak{p} \). Hence \( \mathfrak{a} \subseteq \mathfrak{p} \) or \( \mathfrak{b} \subseteq \mathfrak{p} \).
\end{proof}

\begin{remark}
The preceding theorem allows one to remove the bzi-semiring assumption from all results in \cite{1} that relied on \cite[Proposition 4.5]{1}.
\end{remark}

We next show that every 
$z$-ideal is 
$z$-semiprime. For this we first prove the following auxiliary result.

\begin{lemma}
\label{radical}
Every $z$-ideal of a semiring $S$ is radical. 
\end{lemma}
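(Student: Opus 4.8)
The plan is to show directly that a $z$-ideal $\mathfrak{z}$ coincides with its radical, i.e.\ that $x^n \in \mathfrak{z}$ for some integer $n \geq 1$ forces $x \in \mathfrak{z}$. The inclusion $\mathfrak{z} \subseteq \sqrt{\mathfrak{z}}$ is automatic, so only this reverse implication needs proof. If $\mathfrak{z} = S$ the statement is trivial, so I would assume $\mathfrak{z}$ is proper.

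The crux is the observation that a maximal ideal $\mathfrak{m}$ contains $x^n$ if and only if it contains $x$. One direction is immediate: if $x \in \mathfrak{m}$, then $x^n \in \mathfrak{m}$ because $\mathfrak{m}$ is an ideal. For the converse, since maximal ideals are prime, writing $x^n = x \cdot x^{n-1}$ and inducting on $n$ shows that $x^n \in \mathfrak{m}$ forces $x \in \mathfrak{m}$. Consequently the two families of maximal ideals agree, $\mathcal{M}_x = \mathcal{M}_{x^n}$, and therefore the associated basic $z$-ideals coincide: $\mathfrak{m}_x = \mathfrak{m}_{x^n}$.

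With this identity in hand, I would suppose $x^n \in \mathfrak{z}$. The defining property of a $z$-ideal then yields $\mathfrak{m}_{x^n} \subseteq \mathfrak{z}$, and hence $\mathfrak{m}_x \subseteq \mathfrak{z}$ by the previous step. Since $\mathfrak{z}$ is proper, it is contained in some maximal ideal, which must then contain $x^n$; thus $\mathcal{M}_{x^n} = \mathcal{M}_x$ is nonempty, and as $x$ lies in every member of $\mathcal{M}_x$ we get $x \in \mathfrak{m}_x \subseteq \mathfrak{z}$. This shows $\sqrt{\mathfrak{z}} \subseteq \mathfrak{z}$ and completes the argument.

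The step I expect to carry the real weight is the primeness of maximal ideals, on which the element-wise implication $x^n \in \mathfrak{m} \Rightarrow x \in \mathfrak{m}$ rests; this is the place where whatever standing hypotheses are in force on $S$ must be invoked. The only other potential gap—that $x \in \mathfrak{m}_x$ could fail when $x$ belongs to no maximal ideal, in which case $\mathfrak{m}_x = S$—is neutralized by the reduction to proper $\mathfrak{z}$, which guarantees $\mathcal{M}_x \neq \varnothing$.
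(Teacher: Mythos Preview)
Your proof is correct and follows essentially the same route as the paper's: both hinge on the equality $\mathcal{M}_x = \mathcal{M}_{x^n}$ (via primeness of maximal ideals) and then apply the defining property of a $z$-ideal to pass from $x^n \in \mathfrak{z}$ to $x \in \mathfrak{z}$. Your version is simply more explicit, spelling out why $\mathcal{M}_x = \mathcal{M}_{x^n}$ and taking care of the edge case $\mathcal{M}_x = \varnothing$ via the reduction to proper $\mathfrak{z}$, a point the paper leaves implicit.
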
 

\begin{proof}
The proof is in fact  part of the claim \cite[Lemma 3.14(13)]{1}. We give, however, another proof. Let $\mathfrak{a}$ be a $z$-ideal of $S$ and let $x \in \sqrt{\mathfrak{a}}$. Then 
$x^n \in \mathfrak{a}$ for some $n \in \mathds{N}$. Clearly, $\mathcal{M}_x=\mathcal{M}_{x^n}$. 
Therefore $x \in \mathfrak{a}$, since $\mathfrak{a}$ is a $z$-ideal. Hence $\mathfrak{a}=\sqrt{\mathfrak{a}}.$
\end{proof}

\begin{proposition}
Let \( \mathfrak{p} \in \z \) and \( \mathfrak{a} \in \id \). If \( \mathfrak{a}^2 \subseteq \mathfrak{p} \), then \( \mathfrak{a} \subseteq \mathfrak{p} \).
\end{proposition}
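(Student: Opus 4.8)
The plan is to reduce everything to Lemma \ref{radical}, which tells us that a $z$-ideal coincides with its own radical. Since $\mathfrak{p} \in \z$, that lemma gives $\mathfrak{p} = \sqrt{\mathfrak{p}}$, and this is the only property of $\mathfrak{p}$ I intend to use. Notice in particular that the argument will never require $\mathfrak{a}$ itself to be a $z$-ideal; this is why the statement is genuinely stronger than the defining condition of $z$-semiprimeness, where $\mathfrak{a}$ was restricted to lie in $\z$.

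First I would fix an arbitrary element $x \in \mathfrak{a}$. Because $\mathfrak{a}^2$ is the ideal generated by all products of pairs of elements of $\mathfrak{a}$, it contains in particular the product $x \cdot x = x^2$. The hypothesis $\mathfrak{a}^2 \subseteq \mathfrak{p}$ then immediately yields $x^2 \in \mathfrak{p}$. Next I would invoke Lemma \ref{radical}: since $\mathfrak{p}$ is a $z$-ideal it is radical, so from $x^2 \in \mathfrak{p}$ we conclude $x \in \sqrt{\mathfrak{p}} = \mathfrak{p}$. As $x$ was an arbitrary element of $\mathfrak{a}$, this shows $\mathfrak{a} \subseteq \mathfrak{p}$, which is the desired conclusion.

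There is no serious obstacle once Lemma \ref{radical} is available; the whole proof is a two-line application of the radical property. The only point deserving a moment's care is the observation that $x^2 \in \mathfrak{a}^2$ for every $x \in \mathfrak{a}$, which rests solely on the definition of the product of ideals in a semiring. Everything else follows directly from $\mathfrak{p} = \sqrt{\mathfrak{p}}$.
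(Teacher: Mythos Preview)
Your proof is correct and follows essentially the same approach as the paper: both rely on Lemma~\ref{radical} to obtain $\sqrt{\mathfrak{p}}=\mathfrak{p}$ and then use the elementary fact that squaring does not change the radical. The only cosmetic difference is that the paper argues at the level of ideals via the chain $\mathfrak{a}\subseteq\sqrt{\mathfrak{a}}=\sqrt{\mathfrak{a}^2}\subseteq\sqrt{\mathfrak{p}}=\mathfrak{p}$, whereas you pick an element $x\in\mathfrak{a}$ and pass through $x^2\in\mathfrak{p}$; the underlying idea is identical.
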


\begin{proof}
Since \( \mathfrak{p} \) is a \( z \)-ideal and \( \mathfrak{a}^2 \subseteq \mathfrak{p} \), we have
$\mathfrak{a} \subseteq \sqrt{\mathfrak{a}}
=\sqrt{\mathfrak{a}^2} \subseteq 
\sqrt{\mathfrak{p}}=\mathfrak{p}.$
\end{proof}

We now turn to the notion of 
$z$-radicals (\textit{cf}. \cite[Definition 4.10]{1}), which requires refinement. The need for revision arises in \cite[Lemma 4.11(3)]{1} from the observation that the product of two 
$z$-ideals need not be a 
$z$-ideal, and from the observation in Theorem \ref{z-radical} that every 
$z$-ideal is already 
$z$-radical.

\begin{definition}
\label{radd}
The $z$-radical of an ideal $\mathfrak{a}$ of a semiring $S$ is defined by
\[ \sqrt[z]{\mathfrak{a}}= \bigcap_{\mathfrak{p} 
\in \spec} \left\{\mathfrak{p} ~|~ \mathfrak{a} \subseteq \mathfrak{p}\right\}.\]
Whenever $\sqrt[z]{\mathfrak{a}}=\mathfrak{a}$, 
we say $\mathfrak{a}$ is a $z$-radical ideal.
\end{definition} 

The following result constitutes a revised formulation of \cite[Lemma 4.11]{1}. We additionally observe that the inclusions stated in part (3) of that lemma are, in fact, equalities, as clarified in part (4) below.

\begin{lemma}\label{Lemma of z-rad}
Let $\mathfrak{a}$ and $\mathfrak{b}$ be two ideals of a semiring $S$. Then the following holds.

\begin{enumerate}
\item $\sqrt[z]{\mathfrak{a}}$ is a $z$-ideal containing $\mathfrak{a}$.
\item If $\mathfrak{a} \subseteq \mathfrak{b}$, then
$\sqrt[z]{\mathfrak{a}} \subseteq \sqrt[z]{\mathfrak{b}}$.
\item $\sqrt[z]{\sqrt[z]{\mathfrak{a}}}=\sqrt[z]{\mathfrak{a}}$.
\item $\sqrt[z]{\mathfrak{a}\mathfrak{b}}=\sqrt[z]{\mathfrak{a} \cap \mathfrak{b}}=\sqrt[z]{\mathfrak{a}} \cap \sqrt[z]{\mathfrak{b}}$.
\end{enumerate}
\end{lemma}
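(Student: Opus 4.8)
The plan is to handle the four parts in sequence, in each case translating the statement into a property of the family of $z$-prime ideals that indexes the defining intersection. For part (1), I would first record that an arbitrary intersection of $z$-ideals is again a $z$-ideal: if $x$ lies in $\bigcap_i \mathfrak{z}_i$ with each $\mathfrak{z}_i \in \z$, then $\mathfrak{m}_x \subseteq \mathfrak{z}_i$ for every $i$, and hence $\mathfrak{m}_x \subseteq \bigcap_i \mathfrak{z}_i$. Each $\mathfrak{p} \in \spec$ is in particular a $z$-ideal, and each $\mathfrak{p}$ occurring in the intersection contains $\mathfrak{a}$; it follows that $\sqrt[z]{\mathfrak{a}}$ is a $z$-ideal containing $\mathfrak{a}$. (If no $z$-prime contains $\mathfrak{a}$, the empty intersection gives $\sqrt[z]{\mathfrak{a}} = S$, which is trivially a $z$-ideal.)

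Part (2) follows directly from the definition: if $\mathfrak{a} \subseteq \mathfrak{b}$, then every $z$-prime containing $\mathfrak{b}$ also contains $\mathfrak{a}$, so the index family for $\sqrt[z]{\mathfrak{b}}$ is contained in that for $\sqrt[z]{\mathfrak{a}}$, and intersecting over a smaller family yields a larger ideal. For part (3), I would show that a $z$-prime $\mathfrak{p}$ contains $\sqrt[z]{\mathfrak{a}}$ if and only if it contains $\mathfrak{a}$: the forward direction uses $\mathfrak{a} \subseteq \sqrt[z]{\mathfrak{a}}$ from part (1), while the reverse holds because $\sqrt[z]{\mathfrak{a}}$ is contained in every $z$-prime over $\mathfrak{a}$ by construction. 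The two defining families therefore coincide, and so do their intersections.

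The substance of the lemma is part (4), and the step I expect to be the main obstacle is the equivalence, for a $z$-prime $\mathfrak{p}$, of the three conditions $\mathfrak{a}\mathfrak{b} \subseteq \mathfrak{p}$, $\mathfrak{a} \cap \mathfrak{b} \subseteq \mathfrak{p}$, and $(\mathfrak{a} \subseteq \mathfrak{p}$ or $\mathfrak{b} \subseteq \mathfrak{p})$. The implications from the disjunction down to $\mathfrak{a}\mathfrak{b} \subseteq \mathfrak{p}$ are immediate from $\mathfrak{a}\mathfrak{b} \subseteq \mathfrak{a} \cap \mathfrak{b}$ together with $\mathfrak{a} \cap \mathfrak{b} \subseteq \mathfrak{a}$ and $\mathfrak{a} \cap \mathfrak{b} \subseteq \mathfrak{b}$. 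The delicate reverse implication $\mathfrak{a}\mathfrak{b} \subseteq \mathfrak{p} \Rightarrow (\mathfrak{a} \subseteq \mathfrak{p}$ or $\mathfrak{b} \subseteq \mathfrak{p})$ is precisely where Theorem \ref{wbzi} enters: being $z$-prime, $\mathfrak{p}$ is a prime $z$-ideal, and hence prime against all ideals of $S$ rather than only $z$-ideals, which is exactly what licenses splitting an arbitrary product. Granting this equivalence, the families of $z$-primes indexing $\sqrt[z]{\mathfrak{a}\mathfrak{b}}$ and $\sqrt[z]{\mathfrak{a} \cap \mathfrak{b}}$ both coincide with $\{\mathfrak{p} \in \spec \mid \mathfrak{a} \subseteq \mathfrak{p}\} \cup \{\mathfrak{p} \in \spec \mid \mathfrak{b} \subseteq \mathfrak{p}\}$. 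Since $\sqrt[z]{\mathfrak{a}} \cap \sqrt[z]{\mathfrak{b}}$ is exactly the intersection of the $z$-primes in this union, all three expressions reduce to a single $z$-ideal, which establishes the chain of equalities.
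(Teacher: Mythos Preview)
Your proof is correct and follows essentially the same route as the paper: parts (1)--(3) are handled identically, and in part (4) both arguments hinge on the fact that a $z$-prime $\mathfrak{p}$ is an honest prime ideal (via Theorem~\ref{wbzi}), so that $\mathfrak{a}\mathfrak{b}\subseteq\mathfrak{p}$ forces $\mathfrak{a}\subseteq\mathfrak{p}$ or $\mathfrak{b}\subseteq\mathfrak{p}$. The only cosmetic difference is that the paper closes a cyclic chain of inclusions while you phrase it as an equivalence of three indexing conditions; the content is the same.
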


\begin{proof}
$(1)$ $\sqrt[z]{\mathfrak{a}}$ is a $z$-ideal as an
intersection of $z$-ideals. It is clear that $\mathfrak{a}
\subseteq \sqrt[z]{\mathfrak{a}}$.

$(2)$ Since $\mathfrak{a} \subseteq \mathfrak{b}$, for every 
$\mathfrak{p} \in \spec$, $\mathfrak{b} \subseteq \mathfrak{p}$
implies that $\mathfrak{a} \subseteq  \mathfrak{p}$. Therefore, $\sqrt[z]{\mathfrak{a}} \subseteq \sqrt[z]{\mathfrak{b}}$.

$(3)$ It is clear that for every 
$\mathfrak{p} \in \spec$, $\mathfrak{a} \subseteq \mathfrak{p}$ if and only if 
$\sqrt[z]{\mathfrak{a}} \subseteq \mathfrak{p}$. Then, by Definition \ref{radd}, 
$\sqrt[z]{\sqrt[z]{\mathfrak{a}}}=\sqrt[z]{\mathfrak{a}}$.

$(4)$ Using $(2)$, we have
$\sqrt[z]{\mathfrak{a}\mathfrak{b}} \subseteq \sqrt[z]{\mathfrak{a} 
\cap \mathfrak{b}} \subseteq  \sqrt[z]
{\mathfrak{a}} \cap \sqrt[z]{\mathfrak{b}}$.
Now, it is enough to show that $\sqrt[z]
{\mathfrak{a}} \cap \sqrt[z]{\mathfrak{b}} \subseteq 
\sqrt[z]{\mathfrak{a}\mathfrak{b}}$. Let
$\mathfrak{p} \in \spec$ such that $\mathfrak{a} \mathfrak{b} \subseteq \mathfrak{p}$. Since $\mathfrak{p}$
is prime,  either $\mathfrak{a}\subseteq \p$ or $\mathfrak{b} \subseteq \mathfrak{p}.$ Thus, $\sqrt[z]{\mathfrak{a}}$ or $\sqrt[z]{\mathfrak{b}} \subseteq \mathfrak{p}.$ Then $\sqrt[z]{\mathfrak{a}} \cap \sqrt[z]{\mathfrak{b}} \subseteq \mathfrak{p}.$
\end{proof}

Thanks to Lemma~\ref{radical}, the following result provides a revised version of \cite[Theorem 4.16]{1}.

\begin{theorem}\label{z-radical}
For any ideal $\mathfrak{a}$ of a semiring $S$, the following are equivalent.
\begin{enumerate}
\item $\mathfrak{a}$ is z-ideal..
\item $\mathfrak{a}$ is $z$-radical.
\end{enumerate}
\end{theorem}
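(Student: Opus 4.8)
The plan is to establish the two implications separately, with essentially all the work concentrated in the forward direction. The implication $(2)\Rightarrow(1)$ is immediate: if $\mathfrak{a}=\sqrt[z]{\mathfrak{a}}$, then $\mathfrak{a}$ is a $z$-ideal because $\sqrt[z]{\mathfrak{a}}$ is one by Lemma~\ref{Lemma of z-rad}(1). For $(1)\Rightarrow(2)$, I would start from the inclusion $\mathfrak{a}\subseteq\sqrt[z]{\mathfrak{a}}$, again supplied by Lemma~\ref{Lemma of z-rad}(1), and reduce the claim to the reverse inclusion $\sqrt[z]{\mathfrak{a}}\subseteq\mathfrak{a}$. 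Since $\sqrt[z]{\mathfrak{a}}$ is the intersection of the members of $\spec$ containing $\mathfrak{a}$, this reverse inclusion amounts to a separation statement: for every $x\notin\mathfrak{a}$ there is a prime $z$-ideal $\mathfrak{p}$ with $\mathfrak{a}\subseteq\mathfrak{p}$ and $x\notin\mathfrak{p}$, so that $x\notin\sqrt[z]{\mathfrak{a}}$.

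To produce such a $\mathfrak{p}$, I would run a Zorn's lemma argument on the family $\Sigma$ of all $z$-ideals that contain $\mathfrak{a}$ but not $x$, ordered by inclusion. This family is nonempty, since $\mathfrak{a}\in\Sigma$ precisely because $\mathfrak{a}$ is a $z$-ideal not containing $x$; and it is closed under unions of chains, for if $y$ lies in the union of a chain of $z$-ideals, then $y$ already lies in one member, which contains $\mathfrak{m}_y$, whence the union is again a $z$-ideal and still omits $x$. Zorn's lemma then yields a maximal element $\mathfrak{p}\in\Sigma$, which is by construction a $z$-ideal containing $\mathfrak{a}$ and omitting $x$; in particular $\mathfrak{p}$ is proper.

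The heart of the argument, and the step I expect to be the main obstacle, is to verify that this maximal $\mathfrak{p}$ is prime. Given ideals $\mathfrak{b},\mathfrak{c}$ with $\mathfrak{b}\mathfrak{c}\subseteq\mathfrak{p}$ but neither contained in $\mathfrak{p}$, the $z$-closures $\cz(\mathfrak{p}+\mathfrak{b})$ and $\cz(\mathfrak{p}+\mathfrak{c})$ are $z$-ideals strictly larger than $\mathfrak{p}$, so maximality forces $x$ into each of them. I would then combine the product inequality $\cz(\mathfrak{u})\cz(\mathfrak{v})\subseteq\cz(\mathfrak{u}\mathfrak{v})$ from \cite[Lemma 3.14]{1} with the observation $(\mathfrak{p}+\mathfrak{b})(\mathfrak{p}+\mathfrak{c})\subseteq\mathfrak{p}$ and the identity $\cz(\mathfrak{p})=\mathfrak{p}$ to conclude $\cz(\mathfrak{p}+\mathfrak{b})\,\cz(\mathfrak{p}+\mathfrak{c})\subseteq\mathfrak{p}$. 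Since $x$ lies in both factors, this gives $x^2\in\mathfrak{p}$; as $\mathfrak{p}$ is radical by Lemma~\ref{radical}, it follows that $x\in\mathfrak{p}$, contradicting $x\notin\mathfrak{p}$. Hence $\mathfrak{p}$ is a prime $z$-ideal, and therefore $z$-prime by Theorem~\ref{wbzi}, so $\mathfrak{p}\in\spec$, and it separates $x$ from $\mathfrak{a}$ as required. The delicate point throughout is that sums such as $\mathfrak{p}+\mathfrak{b}$ are generally not $z$-ideals, which is exactly why passing to $z$-closures and invoking the product inequality of \cite[Lemma 3.14]{1} is indispensable; the radical property of $z$-ideals from Lemma~\ref{radical} then does the final work of turning $x^2\in\mathfrak{p}$ into $x\in\mathfrak{p}$.
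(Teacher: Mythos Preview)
Your argument is correct. Both implications are handled soundly: the union of a chain of $z$-ideals is again a $z$-ideal, your maximal $\mathfrak{p}$ is proper since $x\notin\mathfrak{p}$, and the primality step is valid because $\cz(\mathfrak{p}+\mathfrak{b})\,\cz(\mathfrak{p}+\mathfrak{c})\subseteq\cz\bigl((\mathfrak{p}+\mathfrak{b})(\mathfrak{p}+\mathfrak{c})\bigr)\subseteq\cz(\mathfrak{p})=\mathfrak{p}$ together with Lemma~\ref{radical} yields the contradiction $x\in\mathfrak{p}$.

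The paper's own proof follows the same overall strategy---separate $x\notin\mathfrak{a}$ from $\mathfrak{a}$ by a member of $\spec$---but outsources the separation step to \cite[Lemma~4.14]{1}: it observes (via Lemma~\ref{radical} applied to $\mathfrak{a}$) that the multiplicatively closed set $\{x^n\mid n\in\mathds{N}\}$ is disjoint from $\mathfrak{a}$, and then invokes that cited lemma directly to conclude $x\notin\sqrt[z]{\mathfrak{a}}$. Your version instead carries out the Zorn-and-primality argument explicitly, which is presumably the content of \cite[Lemma~4.14]{1} itself. The trade-off is that the paper's proof is three lines long but relies on an external black box, while yours is self-contained and makes transparent exactly where the $z$-closure product inequality and the radicality of $z$-ideals enter (you apply Lemma~\ref{radical} to the maximal $\mathfrak{p}$ rather than to $\mathfrak{a}$). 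Substantively the two arguments are the same.
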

\begin{proof}
It is sufficient to prove $(1) \Rightarrow (2)$. We know that $\mathfrak{a} \subseteq \sqrt[z]{\mathfrak{a}}$. Let $x \notin \mathfrak{a}$. Then $X=\{x^n ~|~ n \in \mathds{N}\}$ is a multiplicatively closed subset of $S$ disjoint with 
$\mathfrak{a}$. Indeed, if $x^n \in \mathfrak{a}$, for some $n \in \mathds{N}$, then $x \in \sqrt{\mathfrak{a}}=\mathfrak{a}$, which
by  Lemma \ref{radical} is absurd. Hence, $x \notin \sqrt[z]{\mathfrak{a}}$, by \cite[Lemma 4.14.]{1},  
\end{proof}

In \cite[Proposition 4.23]{1}, it is proved  that 
a $z$-ideal of a bzi-semiring is 
$z$-prime if and only if it is $z$-semiprime and $z$-strongly irreducible. Thanks to Theorem \ref{wbzi} and Lemma \ref{radical}, the statement can be simplified.
 
\begin{proposition}
An ideal of a semiring is $z$-prime if and only if it is a $z$-strongly irreducible.
\end{proposition}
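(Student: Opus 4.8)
The plan is to verify both implications directly, exploiting the fact that the $z$-semiprime hypothesis appearing in \cite[Proposition 4.23]{1} has become automatic. Throughout, I take \emph{$z$-strongly irreducible} to mean, as in \cite{1}, a proper $z$-ideal $\mathfrak{p}$ for which $\mathfrak{a} \cap \mathfrak{b} \subseteq \mathfrak{p}$ implies $\mathfrak{a} \subseteq \mathfrak{p}$ or $\mathfrak{b} \subseteq \mathfrak{p}$, for all $\mathfrak{a}, \mathfrak{b} \in \z$.

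For the forward implication, I would assume $\mathfrak{p}$ is $z$-prime and take $\mathfrak{a}, \mathfrak{b} \in \z$ with $\mathfrak{a} \cap \mathfrak{b} \subseteq \mathfrak{p}$. Since $\mathfrak{a}\mathfrak{b} \subseteq \mathfrak{a} \cap \mathfrak{b} \subseteq \mathfrak{p}$ and $\mathfrak{a}, \mathfrak{b}$ are $z$-ideals, $z$-primeness immediately yields $\mathfrak{a} \subseteq \mathfrak{p}$ or $\mathfrak{b} \subseteq \mathfrak{p}$, so $\mathfrak{p}$ is $z$-strongly irreducible. This direction needs nothing beyond the inclusion $\mathfrak{a}\mathfrak{b} \subseteq \mathfrak{a} \cap \mathfrak{b}$.

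For the converse, I would let $\mathfrak{p}$ be $z$-strongly irreducible and suppose $\mathfrak{a}\mathfrak{b} \subseteq \mathfrak{p}$ with $\mathfrak{a}, \mathfrak{b} \in \z$. The key step is to promote the product containment to an intersection containment: from $(\mathfrak{a} \cap \mathfrak{b})^2 \subseteq \mathfrak{a}\mathfrak{b} \subseteq \mathfrak{p}$, together with the fact that $\mathfrak{a} \cap \mathfrak{b}$ is an ideal, the earlier proposition (which asserts that $\mathfrak{c}^2 \subseteq \mathfrak{p}$ forces $\mathfrak{c} \subseteq \mathfrak{p}$ for any ideal $\mathfrak{c}$ when $\mathfrak{p} \in \z$) gives $\mathfrak{a} \cap \mathfrak{b} \subseteq \mathfrak{p}$. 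Strong irreducibility then delivers $\mathfrak{a} \subseteq \mathfrak{p}$ or $\mathfrak{b} \subseteq \mathfrak{p}$, so $\mathfrak{p}$ is $z$-prime.

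The only substantive content sits in this promotion step, and it is exactly where the bzi-semiring assumption of \cite[Proposition 4.23]{1} disappears: instead of carrying $z$-semiprimeness as a separate hypothesis, one observes that every $z$-ideal is automatically $z$-semiprime, because it is radical by Lemma \ref{radical}. I do not anticipate a genuine obstacle, only a point of care: should the definition of $z$-strongly irreducible in \cite{1} quantify over all of $\id$ rather than merely $\z$, I would invoke Theorem \ref{wbzi} to identify $z$-primeness with ordinary primeness of the $z$-ideal $\mathfrak{p}$, whence the forward implication extends verbatim to arbitrary $\mathfrak{a}, \mathfrak{b} \in \id$ through the same inclusion $\mathfrak{a}\mathfrak{b} \subseteq \mathfrak{a} \cap \mathfrak{b}$, while the converse is unaffected since $\mathfrak{a}, \mathfrak{b} \in \z \subseteq \id$.
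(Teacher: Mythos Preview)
Your proof is correct, and the forward direction matches the paper's (which simply says ``clearly''). For the converse, both arguments reduce to promoting $\mathfrak{a}\mathfrak{b}\subseteq\mathfrak{p}$ to $\mathfrak{a}\cap\mathfrak{b}\subseteq\mathfrak{p}$, but the mechanisms differ. The paper routes through the $z$-radical apparatus just developed: from Lemma~\ref{Lemma of z-rad}(4) and Theorem~\ref{z-radical} it obtains $\mathfrak{a}\cap\mathfrak{b}\subseteq\sqrt[z]{\mathfrak{a}\cap\mathfrak{b}}=\sqrt[z]{\mathfrak{a}\mathfrak{b}}\subseteq\sqrt[z]{\mathfrak{p}}=\mathfrak{p}$, and then invokes \cite[Proposition~4.19]{1} to upgrade $\mathfrak{p}$ to a strongly irreducible ideal before concluding. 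You instead use the elementary inclusion $(\mathfrak{a}\cap\mathfrak{b})^{2}\subseteq\mathfrak{a}\mathfrak{b}$ together with the earlier proposition that every $z$-ideal is semiprime; this needs only Lemma~\ref{radical} and bypasses the $z$-radical machinery entirely. Your approach is shorter and more self-contained, while the paper's version serves as an application of the revised $z$-radical theory. You also avoid the paper's detour through \cite[Proposition~4.19]{1}, which is indeed unnecessary since $\mathfrak{a},\mathfrak{b}\in\z$ already.
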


\begin{proof}
Clearly, every $z$-prime is a $z$-strongly
irreducible ideal. Conversely, let 
$\mathfrak{p}$ be a $z$-strongly irreducible
ideal. If $\mathfrak{a} \mathfrak{b} \subseteq \mathfrak{p}$, for some $\mathfrak{a}$, $\mathfrak{b} \in \z$. 
Then by Theorem \ref{z-radical} and Lemma \ref{Lemma of z-rad}
$ \sqrt[z]{\mathfrak{a} \cap \mathfrak{b}}=
 \sqrt[z]{\mathfrak{a}  \mathfrak{b}} 
 \subseteq \sqrt[z]{\mathfrak{p}}=\mathfrak{p}$.
 Therefore, $\mathfrak{a} \cap \mathfrak{b}
 \subseteq \mathfrak{p}.$ By \cite[Proposition 4.19.]{1}, $\mathfrak{p}$ is strongly irreducible, and hence,  either $\mathfrak{a} \subseteq \mathfrak{p}$
 or $\mathfrak{b} \subseteq \mathfrak{p}.$
\end{proof}

\begin{remark} 
We finish this note with a few further observations.
\begin{enumerate}
\item[$\bullet$] The proof of \cite[Lemma 4.2]{1} is straightforward, as every maximal ideal is $z$-maximal.

\item[$\bullet$] The proof of the converse of \cite[Proposition 
4.3]{1} can be simplified as follows.
Let $\mathfrak{m}$ be a $z$-maximal ideal. Suppose that $\mathfrak{m}$ is not maximal, then $\mathfrak{m}$ is properly contained
in a maximal ideal $\mathfrak{r}$. However,
every maximal ideal is $z$-maximal, 
which leads to a contradiction.

\item[$\bullet$] Since every $z$-ideal is radical and that  $\cz(\mathfrak{a})$ is a $z$-ideal, it follows that \cite[Lemma 3.14(12)]{1} will become $\cz(\mathfrak{a})=\cz(\sqrt{\mathfrak{a}})$.
\end{enumerate}
\end{remark}


\begin{thebibliography}{99}
\bibitem{1}
A. Goswami, On $z$-ideals and $z$-closure operations of semirings, I,\, \textit{Quaest. Math.}, 48(3) (2025), 395--415.
\end{thebibliography}
\end{document}